 \newtheorem{theorem}{Theorem}[section]%
 \newtheorem{corollary}[theorem]{Corollary}
 \theoremstyle{definition}
 \theoremstyle{remark}
  \numberwithin{equation}{section}
\renewcommand{\phi}{\varphi}
\renewcommand{\theta}{\vartheta}
\DeclareMathOperator{\tform}{\mathfrak{t}}
\DeclareMathOperator{\wform}{\mathfrak{w}}
\DeclarePairedDelimiterX\sipt[2]{(}{)_{\tform}}{#1\,\delimsize\vert\,#2}
\DeclarePairedDelimiterX\sipv[2]{(}{)_{v}}{#1\,\delimsize\vert\,#2}
\DeclarePairedDelimiterX\sipw[2]{(}{)_{w}}{#1\,\delimsize\vert\,#2}
\newcommand{\alg}{\mathscr{A}}
\newcommand{\ideal}{\mathscr{I}}
\newcommand{\dupC}{\mathbb{C}}
\newcommand{\dom}{\operatorname{dom}}
\newcommand{\ran}{\operatorname{ran}}
\newcommand{\lef}{\mathscr{L}(E,F)}
\newcommand{\bha}{\mathscr{B}(\hila)}
\newcommand{\sigef}{\sigma(E,F)}
\newcommand{\sigfe}{\sigma(F,E)}
\newcommand{\hil}{H}
\newcommand{\hila}{H_A}
\newcommand{\kil}{K}
\newcommand{\bh}{\mathscr{B}(\hil)}
\newcommand{\bhk}{\mathscr{B}(\hil,\kil)}
\DeclarePairedDelimiterX\abs[1]{\lvert}{\rvert}{#1}
\DeclarePairedDelimiterX\sip[2]{(}{)}{#1\,\delimsize\vert\,#2}
\DeclarePairedDelimiterX\sipta[2]{(}{)_{\!_{\widetilde{A}}}}{#1\,\delimsize\vert\,#2}
\DeclarePairedDelimiterX\sipf[2]{(}{)_{f}}{#1\,\delimsize\vert\,#2}
\DeclarePairedDelimiterX\sipg[2]{(}{)_{g}}{#1\,\delimsize\vert\,#2}
\DeclarePairedDelimiterX\siptw[2]{(}{)_{\tform+\wform}}{#1\,\delimsize\vert\,#2}
\DeclarePairedDelimiterX\set[2]{\{}{\}}{#1\,\delimsize\vert\,#2}
\DeclarePairedDelimiterX\dual[2]{\langle}{\rangle}{#1,#2}
\DeclarePairedDelimiterX\duale[2]{\langle}{\rangle_1}{#1,#2}
\DeclarePairedDelimiterX\dualk[2]{\langle}{\rangle_2}{#1,#2}
\DeclarePairedDelimiterX\sdual[2]{[}{]}{#1,#2}
\DeclarePairedDelimiterX\sipa[2]{(}{)_{\!_A}}{#1\,\delimsize\vert\,#2}
\DeclarePairedDelimiterX\sipc[2]{(}{)_{\!_C}}{#1\,\delimsize\vert\,#2}
\DeclarePairedDelimiterX\sipab[2]{(}{)_{\!_{A+B}}}{#1\,\delimsize\vert\,#2}
\DeclarePairedDelimiterX\sipb[2]{(}{)_{\!_B}}{#1\,\delimsize\vert\,#2}
\newcommand{\anti}[1]{\bar{#1}'}
\newcommand{\opmatrix}[4]{\left[\begin{array}{cc}\!\! #1 &  #2\!\!\\ \!\! #3& #4\!\!\end{array}\right]}
\newcommand{\kismatrix}[4]{\begin{bsmallmatrix} #1 &  #2\\ #3& #4\end{bsmallmatrix}}
\begin{document}
\sloppy 
\title[Self-adjoint extensions and the Strong Parrott Theorem]{Operators on anti-dual pairs: self-adjoint extensions and the Strong Parrott Theorem}

\author[Zs. Tarcsay]{Zsigmond Tarcsay 
}
\thanks{The corresponding author Zs. Tarcsay was supported by DAAD-TEMPUS Cooperation Project ``Harmonic Analysis and Extremal Problems'' (grant no. 308015). Project no. ED 18-1-2019-0030 (Application-specific highly reliable IT
solutions)
has been implemented with the support provided from the National Research,
Development and Innovation Fund of Hungary, financed under the Thematic
Excellence
Programme funding scheme.}

\address{%
Zs. Tarcsay \\ Department of Applied Analysis  and Computational Mathematics\\ E\"otv\"os Lor\'and University\\ P\'azm\'any P\'eter s\'et\'any 1/c.\\ Budapest H-1117\\ Hungary}

\email{tarcsay@cs.elte.hu}

\author[T. Titkos]{Tam\'as Titkos}
\thanks{T. Titkos was supported by the Hungarian National Research, Development and Innovation Office - NKFIH (Grant No. PD128374 and Grant No. K115383), by the J\'anos Bolyai Research Scholarship of the Hungarian Academy of Sciences, and by the \'UNKP-18-4-BGE-3 New National Excellence Program of the Ministry of Human Capacities.}

\address{T. Titkos \\ Alfr\'ed R\'enyi Institute of Mathematics\\ Re\'altanoda utca 13-15.\\ Budapest H-1053\\ Hungary\\ and BBS University of Applied Sciences\\ Alkotm\'any u. 9.\\ Budapest H-1054\\ Hungary }
\email{titkos@renyi.hu}

\subjclass[2010]{Primary 47A20, 46A22 Secondary  46A20, 46K10}

\keywords{Self-adjoint operator, symmetric operator, anti-duality, operator extension, Parrott theorem, $^*$-algebra, positive functional, hermitian functional}

\maketitle
\begin{abstract}
The aim of this paper is to develop an approach to obtain self-adjoint extensions of symmetric operators acting on anti-dual pairs. The main advantage of such a result is that it can be applied for structures not carrying a Hilbert space structure or a normable topology. In fact, we will show how hermitian extensions of linear functionals of involutive algebras can be governed by means of their induced operators. As an operator theoretic application, we provide a direct generalization of Parrott's theorem  on contractive completion of $2$ by $2$ block operator-valued matrices. To exhibit the applicability in noncommutative integration, we characterize hermitian extendibility of symmetric functionals defined on a left ideal of a $C^{*}$-algebra.

\end{abstract}

\section*{Introduction}
The question whether a self-adjoint extension exists arises naturally in various situations when a partially defined (bounded or unbounded) symmetric operator is given. For classical results we refer the reader to \cites{Ando-Nishio,Coddington-deSnoo,HassiMalamudSnoo,Sebestyen93} and the references therein, for more recent results see for example \cites{BaidukHassi,MMM1}. In our previous paper \cite{KV}, we have developed a Krein--von Neumann type extension theory for positive operators acting on anti-dual pairs. That technique is general enough to not only  overcome the lack of a Hilbert space structure, but also the lack of a normable topology. Our running example in \cite{KV} -- illustrating the applicability of the general setting -- came from noncommutative integration theory. Namely, we have demonstrated how functional extensions can be governed by their induced operators. The aim of the present paper is to continue these investigations and to discuss the problem of self-adjoint extendibility. \par
Below we briefly describe the content. 
Section \ref{S: Preliminaries} contains a short overview of concepts and earlier results that help the reader to follow the proofs. In particular, we briefly sketch the construction of the generalized Krein--von Neumann extension that serves as the basis for our treatment. In Section \ref{S: Symmetric} we examine the extension problem in our anti-dual pair setting. The main result Theorem \ref{T:symmetricext}   generalizes Krein's theorem on the existence of a norm preserving self-adjoint extension of a bounded symmetric operator \cite{Krein}*{Theorem 5.33}. Due to the lack of norm, we are going to consider extensions bounded by a fixed positive operator $A$. It will turn out that extensions preserving the $A$-bound form an operator interval. As a nice application of Theorem \ref{T:symmetricext}, in Section \ref{S:Parrott} we will generalize Yamada's recent result \cite{Yamada}, which is an extension of the Strong Parrott Theorem \cites{F-T,parrott}. We will close the paper by demonstrating that Theorem \ref{T:symmetricext} on self-adjoint extensions is an effective generalization. Namely, we shall see in Section \ref{S: funct} how this result can be applied  to obtain hermitian extensions of functionals of an involutive algebra.

\section{Preliminaries}\label{S: Preliminaries}
In this section we summarize shortly all the notions and tools to make the presentation self-contained. For more details we refer the reader to \cite{KV}*{Section 2 and 3}.
An anti-dual pair denoted by $\dual FE$ is a system of two complex vector spaces $E$ and $F$ intertwined by a separating sesquilinear map $\dual\cdot\cdot:F\times E\to\dupC$, i.e., $\dual\cdot\cdot$ is linear in its first, and conjugate linear in its second argument. Let $D$ be a linear subspace of $E$. We call a linear operator $A:D\to F$  \emph{symmetric}, if $\dual{Ax}{y}=\overline{\dual{Ay}{x}}$ holds for all $ x,y\in D$. In analog with the Hilbert space case, an operator $A:D\to F$ is said to be \emph{positive}, if
\begin{equation*}
    \dual{Ax}{x}\geq 0,\qquad x\in D.
\end{equation*}

Just as in the dual pair case, we endow $E$ and $F$ with the corresponding weak topologies $\sigef$, resp. $\sigfe$, induced by the families $\set{\dual f\cdot }{f\in F}$, resp. $\set{\dual\cdot x}{x\in E}$. Both $\sigef$ and $\sigfe$ are locally convex Hausdorff topologies with duality properties 
\begin{equation}\label{E:E'=F}
    \anti E=F\qquad\mbox{and}\qquad F'=E.
\end{equation}
Here $F'$ and $\anti E$ refer to the topological dual and anti-dual space of $F$ and $E$, respectively, and  the vectors  $f\in F$ and $x\in E$ are identified  with $\dual f\cdot$, and  $\dual \cdot x$, respectively.   We will call the anti-dual pair $\dual FE$ weak-* sequentially complete if the topological vector space $(F,\sigfe)$ is sequentially complete.

One of the useful properties of weak topologies is the following: for a topological vector space $(V,\tau)$, a linear operator $T:V\to F$ is continuous with respect to $\tau$ and $\sigfe$ if and only if the linear functionals
\begin{equation*}
    T_x(v)\coloneqq \dual{Tv}{x},\qquad v\in V,
\end{equation*}
are continuous for every $x\in E$. 

This fact and \eqref{E:E'=F} enables us to define the adjoint (that is, the topological transpose) of a weakly continuous operator. Let $\dual{F_1}{E_1}$ and $\dual{F_2}{E_2}$ be anti-dual pairs and  $T:E_1\to F_2$ a weakly continuous linear operator. Then the weakly continuous linear operator $T^*:E_2\to F_1$ satisfying \begin{equation*}
    \dual{Tx_1}{x_2}_2=\overline{\dual{T^*x_2}{x_1}_1},\qquad  x_1\in E_1,x_2\in E_2,
\end{equation*} 
is called the adjoint of $T$. The set of \emph{everywhere defined} weakly continuous (i.e., $\sigef$-$\sigfe$ continuous) linear operators $T:E\to F$ will be denoted by $\lef$. In the case when $\hil$ and $\kil$ are Hilbert spaces, $\mathscr{L}(\hil,\kil)$ coincides with the set $\bhk$ of bounded linear operators from $\hil$ to $\kil$. An operator $T\in \lef$ is called self-adjoint if $T^*=T$. Obviously, everywhere defined symmetric operators (and hence everywhere defined positive operators) are automatically weakly continuous and self-adjoint. 
\par
Now we proceed by recalling the construction of the Krein-von Neumann extension of a positive operator. We will use the notations of this section without further notice. For more details see \cite{KV}*{Theorem 3.1 $(iv)\Rightarrow(i)$}.
Let $\dual{F}{E}$ be a $w^*$-sequentially complete anti-dual pair and let $A:E\supseteq\dom A\to F$ be a positive operator with domain $\dom A$. Assume further that
for any $y$ in $E$ there is $M_y\geq0$ such that 
\begin{equation}\label{E:M_y}
 \abs{\dual{Ax}{y}}^2\leq M_y\dual{Ax}{x}\qquad \textrm{for all $x\in\dom A$.}
\end{equation}
This assumption guarantees that one can build a Hilbert space $\hil_{A}$ by taking the Hilbert space completion of the inner product space $\big(\ran A,\sipa{\cdot}{\cdot}\big)$, where
\begin{equation}\label{E:sipa}
\sipa{{A}x}{{A}x'}:=\dual{{A}x}{x'},\qquad x,x'\in\dom {A}.
\end{equation}
Again, by \eqref{E:M_y}, the canonical embedding operator 
\begin{equation}\label{E:J0}
    J_0:H_A\supseteq\ran A\to F,\qquad J_0(Ax):={A}x
\end{equation}
is weakly continuous, and thus admits a unique continuous extension $J$ to $\hila$ by $w^*$-sequentially completeness of $F$. Since $J\in\mathscr{L}(\hil_{A} ;F)$, we have $J^*\in\mathscr{L}(E ;\hil_{A})$. From 
\begin{equation*}
    \sipa{Ax'}{J^*x}=\dual{J(Ax')}{x}=\dual{Ax'}{x}=\sipa{Ax'}{Ax}, \qquad x,x'\in\dom A,
\end{equation*} 
it follows that $J^*x={A}x$ for all $x\in\dom A$. As for any $x\in\dom {A}$ we have $$JJ^*x=J({A}x)={A}x,$$ 
the operator $JJ^*\in\lef$ is a positive extension of $A$. We will refer to $A_N:=JJ^*$ as the Krein-von Neumann extension of $A$.
\par
We remark that the extension result above is closely related to the theory of reproducing kernels (see for example \cites{LS, pv, Yamada}). In fact, one can say that the operator $A:E\supseteq D\to F$ is a restriction of a reproducing kernel if and only if \eqref{E:M_y} holds. Finally we mention that our assumption on $F$ to be $w^*$-sequentially complete is weaker than quasi-completeness imposed by Schwartz in \cite{LS}.

\section{Self-adjoint extensions of symmetric operators}
\label{S: Symmetric}

M.G. Krein proved in \cite{Krein} that every bounded symmetric Hilbert space operator possesses a norm preserving self-adjoint extension. The problem of constructing self-adjoint extensions of a symmetric operator arises in our anti-dual pair setting naturally. Since we cannot speak about norm preservation due to the lack of norm, we need to find a suitable notion to generalize Krein's theorem. Observe that the norm of a self-adjoint operator $S\in\bh$ can be expressed by means of the partial order induced by positivity. Namely, $\|S\|$ is the smallest constant $\alpha\geq0$ such that $-\alpha I\leq S\leq \alpha I$. Based on this observation, a symmetric operator $S_0:E\supseteq\dom S_0\to F$ is called $A$-\emph{bounded} for a fixed positive operator $A\in\lef$ if 
\begin{equation}\label{E:symmetricext}
\abs{\dual{S_0x}{y}}^2\leq \alpha^2\cdot \dual{Ax}{x}\dual{Ay}{y},\qquad x\in \dom S_0, y\in E,
\end{equation}
holds. The smallest constant $\alpha$ is called the $A$-\emph{bound} of $S_0$ and is denoted by $\alpha_A(S_0)$.
We will call the extension $S\supset S_0$ $A$-\emph{bound preserving} if $\alpha_A(S_0)=\alpha_A(S)$. 
\par
In the next theorem, which is the main result of this section, we will present a sufficient condition to guarantee for a symmetric linear operator that it possesses a self-adjoint extension. Moreover, we describe the set of all $A$-bound preserving extensions of a given symmetric operator.

\begin{theorem}\label{T:symmetricext}
Let $\dual FE$ be a weak-$^*$ sequentially complete anti-dual pair and let    $S_0:\dom S_0\to F$ be a symmetric operator, i.e.,
\begin{align*}
\dual{S_0x}{y}=\overline{\dual{S_0y}{x}},\qquad x,y\in \dom S_0.
\end{align*}
Suppose that  $S_0$ is $A$-bounded with some positive operator $A\in\lef$. Then there exist two distinguished self-adjoint extensions $S_m, S_M\in \lef$ of $S_0$ such that
\begin{equation*}
\alpha_A(S_m)=\alpha_A(S_M)=\alpha_A(S_0).    
\end{equation*}
In fact, the interval $[S_m, S_M]$ consists exactly of all self-adjoint extensions $S\supset S_0$ such that $\alpha_A(S)=\alpha_A(S_0)$:
\begin{align}\label{E:intervalSmSM}
    [S_m, S_M]=\set{S\in\lef}{S_0\subset S=S^*,~\alpha_A(S)=\alpha_A(S_0)}.
\end{align}
\end{theorem}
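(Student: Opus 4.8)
The plan is to reduce the whole statement to the Krein--von Neumann extension theory for \emph{positive} operators on anti-dual pairs from \cite{KV}. Set $\alpha:=\alpha_A(S_0)$ and put
\begin{equation*}
B_{\pm}:=\alpha A|_{\dom S_0}\pm S_0:\dom S_0\to F.
\end{equation*}
Taking $y=x$ in \eqref{E:symmetricext} gives $\abs{\dual{S_0x}{x}}\leq\alpha\dual{Ax}{x}$, so $B_{+},B_{-}$ are positive and $B_{\pm}\leq 2\alpha A|_{\dom S_0}$. The first — and I expect hardest — step is to show that $B_{\pm}$ satisfies condition \eqref{E:M_y}, which is what makes its generalized Krein--von Neumann extension $(B_{\pm})_N\in\lef$ available. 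This is precisely where the \emph{sharpness} of the $A$-bound is used: since $\abs{\dual{S_0x}{y}}\leq\alpha\dual{Ax}{x}^{1/2}\dual{Ay}{y}^{1/2}$, the form $(Ax,Ay)\mapsto\dual{S_0x}{y}$ is well defined on $\hila$ and is represented by an operator $\Phi$ with $\|\Phi\|=\alpha$ whose compression to the closed subspace $\overline{\{Ax:x\in\dom S_0\}}$ is self-adjoint; splitting $\Phi$ into its self-adjoint ``diagonal'' part $A_{11}$ and ``off-diagonal'' part $A_{21}$, the inequality $\Phi^{*}\Phi\leq\alpha^{2}I$ yields $A_{21}^{*}A_{21}\leq\alpha^{2}I-A_{11}^{2}\leq 2\alpha(\alpha I+A_{11})$, and a short computation then gives
\begin{equation*}
\abs{\dual{B_{+}x}{y}}^{2}\leq 4\alpha\,\dual{Ay}{y}\,\dual{B_{+}x}{x},\qquad x\in\dom S_0,\ y\in E,
\end{equation*}
and likewise for $B_{-}$.

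Next I would recast $A$-bound preservation as positivity. For $S=S^{*}\in\lef$ one has $\alpha_A(S)\leq\alpha$ if and only if $-\alpha A\leq S\leq\alpha A$: the forward implication is the case $y=x$, and the converse follows by applying Cauchy--Schwarz to the forms of the everywhere-defined positive operators $\alpha A\pm S$. Since also $\alpha_A(S)\geq\alpha_A(S_0)=\alpha$ whenever $S\supset S_0$, the correspondence $S\leftrightarrow T:=S+\alpha A$ identifies
\begin{equation*}
\set{S\in\lef}{S_0\subset S=S^{*},\ \alpha_A(S)=\alpha}=\set{T-\alpha A}{T\in\lef,\ 0\leq T\leq 2\alpha A,\ T\supset B_{+}}.
\end{equation*}
The only delicate point here is that a positive $T\in\lef$ lying between $(B_{+})_N$ and $2\alpha A-(B_{-})_N$ genuinely \emph{extends} $B_{+}$, and does not merely agree with it in the quadratic form on $\dom S_0$; this is because $T-(B_{+})_N$ is an everywhere-defined positive operator whose quadratic form vanishes on $\dom S_0$, so Cauchy--Schwarz (valid on all of $E$ for an everywhere-defined positive operator) forces it to annihilate $\dom S_0$.

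It then remains to describe the positive extensions $T\in\lef$ of $B_{+}$ dominated by $2\alpha A$. By \cite{KV}, $(B_{+})_N$ is the smallest positive extension of $B_{+}$, and since $B_{+}\leq 2\alpha A|_{\dom S_0}$ one also has $(B_{+})_N\leq 2\alpha A$ (a Krein--von Neumann extension is dominated by every $\lef$-majorant); moreover the order-reversing involution $T\mapsto 2\alpha A-T$ maps positive extensions of $B_{+}$ bounded by $2\alpha A$ bijectively onto positive extensions of $B_{-}=2\alpha A|_{\dom S_0}-B_{+}$ bounded by $2\alpha A$, so the largest such $T$ is $2\alpha A-(B_{-})_N$. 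Hence the set above is the nonempty operator interval $[(B_{+})_N,\,2\alpha A-(B_{-})_N]$, and with
\begin{equation*}
S_m:=(B_{+})_N-\alpha A,\qquad S_M:=\alpha A-(B_{-})_N
\end{equation*}
we obtain self-adjoint $S_m,S_M\in\lef$ with $S_0\subset S_m\leq S_M$; \eqref{E:intervalSmSM} is exactly the identity above, and $\alpha_A(S_m)=\alpha_A(S_M)=\alpha$ follows from $-\alpha A\leq S_m\leq S_M\leq\alpha A$ together with $S_0\subset S_m$. (Alternatively one can work directly in $\hila$: every $A$-bounded self-adjoint $S\in\lef$ is $J\Psi J^{*}$ for a unique self-adjoint $\Psi\in\bha$ with $\|\Psi\|=\alpha_A(S)$, the requirement $S\supset S_0$ fixes the first column of $\Psi$ to be $\Phi$, and the self-adjoint Strong Parrott Theorem \cites{F-T,parrott} realizes the norm-$\leq\alpha$ completions as an operator interval, which transports back along the order-isomorphism $\Psi\mapsto J\Psi J^{*}$.)
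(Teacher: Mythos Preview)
Your approach is in essence the paper's: both pass to the positive operators obtained by shifting $S_0$ by $\pm\alpha$, take their Krein--von Neumann extensions, and shift back. The paper carries this out inside the auxiliary Hilbert space $\hila$ (applying \cite{KV}*{Theorem 4.2} to $\widehat T_m,\widehat T_M:=\|\widehat S_0\|\pm\widehat S_0$), whereas you stay in $\lef$ and apply the anti-dual-pair extension of \cite{KV} directly to $B_\pm=\alpha A|_{\dom S_0}\pm S_0$; one can check that $J\widehat A_mJ^*=(B_+)_N$, so the two constructions produce the same $S_m,S_M$, and your interval argument matches the paper's.

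There is, however, a genuine gap at the step $(B_+)_N\leq 2\alpha A$. Your justification (``a Krein--von Neumann extension is dominated by every $\lef$-majorant'') asserts that $P\leq Q|_{\dom P}$ as quadratic forms forces $P_N\leq Q$, and this is false: in $\dupC^2$ take $\dom P=\Span\{e_1\}$, $Pe_1=e_1$, $Q=\kismatrix{1}{1}{1}{2}$; then $P\leq Q|_{\dom P}$, yet $Q-P_N=\kismatrix{0}{1}{1}{2}$ is indefinite. What \emph{is} true is that $P_N$ lies below every positive \emph{extension} of $P$, and $2\alpha A$ is not one. The repair is already contained in your setup. From the construction in \cite{KV} one has $\dual{(B_+)_Ny}{y}=\sup_{x\in\dom S_0}\abs{\dual{B_+x}{y}}^2/\dual{B_+x}{x}$, so it suffices to sharpen the constant in your key estimate from $4\alpha$ to $2\alpha$: for $h=Ax$ compute directly
\begin{equation*}
\|\alpha h+\Phi h\|_A^2=\alpha^2\|h\|_A^2+2\alpha\sipa{\Phi h}{h}+\|\Phi h\|_A^2\leq 2\alpha\big(\alpha\|h\|_A^2+\sipa{\Phi h}{h}\big)=2\alpha\,\dual{B_+x}{x},
\end{equation*}
using only $\|\Phi\|=\alpha$ (this is exactly the paper's estimate); bounding the $A_{11}$- and $A_{21}$-parts separately, as you did, loses a factor of $2$. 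With the sharp constant, Cauchy--Schwarz in $\hila$ gives $\abs{\dual{B_+x}{y}}^2\leq 2\alpha\,\dual{Ay}{y}\dual{B_+x}{x}$ and hence $(B_+)_N\leq 2\alpha A$; the remainder of your argument then goes through.
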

\begin{proof}
Introduce the following linear manifold
\begin{align*}
\dom \widehat{S}_0:=\set{Ax}{x\in \dom S_0}\subseteq \hila,
\end{align*}
and  fix an $x\in \dom S_0$. Let us define the conjugate linear functional $f_x$ as 
\begin{align*}
f_x: \ran A\subseteq \hila\to \dupC,\qquad f_x(Ay):=\dual{S_0x}{y},\qquad y\in E.
\end{align*}
Observe that $f_x$ is continuous, because
\begin{align}\label{E:fxnorm}
\abs{f_x(Ay)}^2\leq \alpha^2\cdot \dual{Ax}{x},\qquad y\in E, \sipa{Ay}{Ay}\leq 1.
\end{align} 
holds for some $\alpha$ by $A$-boundedness. According to the Riesz representation theorem, there exists a unique representing vector $\zeta_x\in\hila$ such that  
\begin{align*}
\dual{S_0x}{y}=\sipa{\zeta_x}{Ay}, \qquad y\in E.
\end{align*}
If $Ax=Ax'$ for some $x,x'$ in $\dom S_0$ then $f_x=f_{x'}$ and thus $\zeta_x=\zeta_{x'}$. Therefore the mapping $\widehat{S}_0:\dom \widehat{S}_0\to \hila$, $$\widehat{S}_0(Ax):=\zeta_x$$ is well defined and linear. By \eqref{E:fxnorm} we have
\begin{align*}
\sipa{\widehat{S}_0(Ax)}{\widehat{S}_0(Ax)}\leq \alpha^2\cdot \sipa{Ax}{Ax},\qquad x\in \dom S_0,
\end{align*}
whence we infer that $\widehat{S}_0$ is bounded with $\|\widehat{S}_0\|= \alpha_A(S_0)$. For $x,y$ in $\dom S_0$, 
\begin{align*}
\sipa{\widehat{S}_0(Ax)}{Ay}=\dual{S_0x}{y}=\overline{\dual{S_0y}{x}}=\sipa{Ax}{\widehat{S}_0(Ay)},
\end{align*}
hence $\widehat{S}_0$ is symmetric. Introduce the operators 
 \begin{align}
     \widehat{T}_m\coloneqq\|\widehat{S}_0\|+ \widehat{S}_0, \qquad \widehat{T}_M\coloneqq \|\widehat{S}_0\|-\widehat{S}_0.
 \end{align}
 Clearly, $\widehat{T}_m$ and $\widehat{T}_M$ are both positive operators on $\dom \widehat{S}_0$. Furthermore, we have for all $h\in \dom \widehat{S}_0$ that 
 \begin{align*}
\|\widehat{T}_mh\|_A^2=\|\widehat{S}_0h\|_A^2+2\sipa{\widehat{S}_0h}{h}+\|\widehat{S}_0\|^2\|h\|_A^2\leq 2\|\widehat{S}_0\|\sipa{\widehat{T}_mh}{h},
\end{align*}
and similarly, 
\begin{align*}
\|\widehat{T}_Mh\|_A^2\leq 2\|\widehat{S}_0\|\sipa{\widehat{T}_Mh}{h}.
\end{align*}
By \cite{KV}*{Theorem 4.2}, there exist two minimal positive extensions $\widehat{A}_m, \widehat{A}_M\in \bha$ of $\widehat{T}_m$ and $\widehat{T}_M$, respectively. Note also that 
\begin{align*}
\|\widehat{A}_Mk\|_A^2\leq 2\|\widehat{S}_0\|\sipa{\widehat{A}_Mk}{k}\qquad\mbox{and}\qquad \|\widehat{A}_Mk\|_A^2\leq 2\|\widehat{S}_0\|\sipa{\widehat{A}_Mk}{k} 
\end{align*}
hold for all $k\in\hila$. Now set $$\widehat{S}_m:=\widehat{A}_m-\|\widehat{S}_0\|\qquad\mbox{and}\qquad \widehat{S}_M:=\|\widehat{S}_0\|-\widehat{A}_M.$$ Clearly,  $\widehat{S}_m,\widehat{S}_M\in\bha$ are both self-adjoint extensions of  $\widehat{S}_0$. For $k\in\hila$, 
 \begin{align*}
 \|\widehat{S}_mk\|_A^2=\|\widehat{T}_mk\|^2_A-2\|\widehat{S}_0\|^2\sipa{\widehat{T}_mk}{k}+\|\widehat{S}_0\|\|k\|^2_A\leq\|\widehat{S}_0\|\|k\|^2_A,
 \end{align*}
 and therefore $\|\widehat{S}_m\|=\|\widehat{S}_0\|=\alpha_A(S_0)$. Similarly,  $\|\widehat{S}_M\|=\alpha_A(S_0)$. Letting  $$S_m:=J\widehat{S}_mJ^*\qquad\mbox{and}\qquad S_M:=J\widehat{S}_MJ^*,$$ we conclude that   $S_m,S_M\in\lef$ are self-adjoint operators such  that 
 \begin{equation*}
\alpha_A(S_m)=\alpha_A(S_M)=\alpha_A(S_0).    
\end{equation*}
 Finally,    for $x\in \dom S_0$ and $y\in E$, 
\begin{align*}
\dual{S_mx}{y}=\sipa{\widehat{S}_m(Ax)}{Ay}=\sipa{\widehat{S}_0(Ax)}{Ay}=\dual{S_0x}{y},
\end{align*} 
hence $S_0\subset S_m$. A similar calculation shows that $S_0\subset S_M$ holds as well.

To prove \eqref{E:intervalSmSM} let $S\in [S_m,S_M]$ and take $x\in\dom S_0$ and $y\in E$. Since $S-S_m\geq0$ it follows that 
\begin{align*}
    \abs{\dual{(S-S_m)x}{y}}^2&\leq \dual{(S-S_m)x}{x}\dual{(S-S_m)y}{y}\\
    &\leq\dual{(S_M-S_m)x}{x}\dual{(S-S_m)y}{y}=0,
\end{align*}
hence $Sx=S_mx=S_0x$, that is, $S_0\subset S$. On the other hand $S_m\leq S\leq S_M$ implies that 
\begin{align*}
\abs{\dual{Sx}{y}}&\leq \dual{(S-S_m)x}{x}^{1/2}\dual{(S-S_m)y}{y}^{1/2}+\abs{\dual{S_mx}{y}} \\
&\leq 3\alpha_A(S_0)\dual{Ax}{x}^{1/2}\dual{Ay}{y}^{1/2},
\end{align*}
hence there is a symmetric operator $\widehat{S}\in\bha$ with $\|\widehat{S}\|\leq 3  \alpha_A(S_0)$ such that $S=J\widehat{S}J^*$. It is clear that $\widehat S_m\leq \widehat{S}\leq \widehat{S}_M$, and thus $\alpha_A(S_0)=\|\widehat S\|=\alpha_A(S)$. Assume conversely that $S_0\subset S$ is any self-adjoint extension such that $\alpha_A(S)=\alpha_A(S_0)$.  Then it is clear that $\alpha_A(S_0)\pm \widehat{S}$ are bounded positive extensions of $\alpha_A(S_0)\pm \widehat{S}_0$, hence $\widehat A_m \leq \alpha_A(S_0) +\widehat{S}$ and $\widehat A_M\leq \alpha_A(S_0)-\widehat{S}$. Consequently, $\widehat{S}_m\leq \widehat{S}\leq \widehat S_M$ and also $S_m\leq S\leq S_M$. The proof is complete.
\end{proof}
In the following corollary we recover the classical result of Krein on self-adjoint norm-preserving extensions.
\begin{corollary}\label{C:Krein}
Let $\hil$ be a Hilbert space and let $S_0:\dom S_0\to\hil$ be a bounded symmetric operator. Then $S$ admits  two self-adjoint norm-preserving extensions $S_m,S_M\in\bh$ such that    the interval $[S_m, S_M]$ consists exactly of all self-adjoint norm-preserving extensions of $S_0:$
\begin{equation*}
    [S_m,S_M]=\set{S\in\bh}{S_0\subset S=S^*, \|S_0\|=\|S\|}.
\end{equation*}
If a self-adjoint operator $B\in\bh$ leaving $\dom S_0$ invariant satisfies $BS_0\subset S_0B$ then 
\begin{equation}\label{E:BSM}
    S_mB=BS_m,\qquad S_MB=BS_M.
\end{equation}
\end{corollary}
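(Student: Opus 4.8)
The first assertion is the special case of Theorem~\ref{T:symmetricext} obtained by applying it to the anti-dual pair $\dual HH$ (with the inner product of $H$ as pairing) and to the positive operator $A:=I\in\bh$. Here $\mathscr L(H,H)=\bh$, the weak topology $\sigma(H,H)$ makes $H$ sequentially complete since $H$ is reflexive, a bounded symmetric $S_0$ is automatically $I$-bounded, and the $I$-bound equals the operator norm, $\alpha_I(S_0)=\|S_0\|$; moreover $H_A=H$ and $J=J^*=I$, so that $A$-bound preservation is nothing but norm preservation. Theorem~\ref{T:symmetricext} then produces self-adjoint $S_m,S_M\in\bh$ with $\|S_m\|=\|S_M\|=\|S_0\|$ and with $[S_m,S_M]$ equal to the set of all norm-preserving self-adjoint extensions of $S_0$; in particular $S_m$ is its least and $S_M$ its greatest element, so $S_m$ and $S_M$ are uniquely determined and the claim \eqref{E:BSM} is well posed.

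For \eqref{E:BSM} I would first reduce to a \emph{closed} domain: replacing $S_0$ by its continuous extension $\overline{S_0}$ to $\overline{\dom S_0}$ changes neither $\|S_0\|$ nor the family of bounded self-adjoint extensions (every such extension, being continuous, already extends $\overline{S_0}$), hence changes neither $S_m$ nor $S_M$; and since $B$ is continuous, the hypotheses $B\,\overline{\dom S_0}\subseteq\overline{\dom S_0}$ and $B\,\overline{S_0}=\overline{S_0}\,B$ persist. So assume $D:=\dom S_0$ is closed. Since $B=B^*$ and $BD\subseteq D$, the subspace $D$ reduces $B$, so the orthogonal projection onto $D$ commutes with the spectral measure $E(\cdot)$ of $B$, whence $E(\Delta)D\subseteq D$ for every Borel $\Delta$. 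From $BS_0=S_0B$ on $D$ together with $BD\subseteq D$ one gets $p(B)S_0=S_0p(B)$ on $D$ for all polynomials $p$; approximating—uniformly for continuous functions, then boundedly and pointwise for Borel functions via the spectral theorem, and using that $D$ is closed and $S_0$ is bounded—yields $f(B)S_0=S_0f(B)$ on $D$ for every bounded Borel $f$, in particular $E(\Delta)S_0=S_0E(\Delta)$ on $D$.

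The crux is then the following statement about a \emph{single} orthogonal projection $P$ with $PD\subseteq D$ and $PS_0=S_0P$ on $D$. Write $Q:=I-P$. Then $T:=PS_mP+QS_mQ$ is self-adjoint, and for $x\in D$ one checks (using $Px,Qx\in D$, $PS_0=S_0P$ on $D$, and $S_0\subset S_m$) that $Tx=S_0x$; being block-diagonal, $\|T\|\leq\|S_m\|=\|S_0\|$, while $\|T\|\geq\|S_0\|$ because $T$ extends $S_0$. Hence $T\in[S_m,S_M]$ and $T\geq S_m$, i.e.\ $R:=PS_mQ+QS_mP=-(T-S_m)\leq 0$. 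Since $PQ=QP=0$ we have $PRP=QRQ=0$, so the Hermitian form of $R$ vanishes on each of the complementary subspaces $\ran P$ and $\ran Q$; combined with $R\leq 0$ and $R=R^*$ this forces $R=0$, that is $PS_m=S_mP$. The identical computation with $S_M$ in place of $S_m$ (now $T:=PS_MP+QS_MQ\leq S_M$, hence $R:=PS_MQ+QS_MP\geq 0$, and again $R=0$) gives $PS_M=S_MP$. Applying this to $P=E(\Delta)$ for all Borel $\Delta$ shows that $S_m$ and $S_M$ commute with every spectral projection of $B$, hence with $B=\int\lambda\,dE(\lambda)$, which is \eqref{E:BSM}.

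The step I expect to be the main obstacle is the projection lemma, more precisely the implication ``$T\geq S_m\ \Rightarrow\ PS_mQ=QS_mP=0$''. The essential point is that $T-S_m$ is purely off-diagonal relative to $H=\ran P\oplus\ran Q$, so the form of $R$ is zero on each summand; this, together with the semidefiniteness of $R$ and $R=R^*$, is what annihilates $R$. The surrounding reductions—passing to a closed domain and upgrading the polynomial functional-calculus identity to bounded Borel functions—are routine but need to be stated with care, since without closedness of $\dom S_0$ the symbol $S_0E(\Delta)x$ would not even be defined.
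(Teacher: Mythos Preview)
Your argument is correct, but it takes a genuinely different route from the paper. The paper proves \eqref{E:BSM} in two lines by exploiting the \emph{explicit} description of the extremal extensions obtained inside the proof of Theorem~\ref{T:symmetricext}: with $A=I$ one has
\[
S_m=(S_0+\|S_0\|)_N-\|S_0\|,\qquad S_M=\|S_0\|-(\|S_0\|-S_0)_N,
\]
where $(\cdot)_N$ is the Krein--von Neumann extension. Since $BS_0\subset S_0B$ implies $B(\|S_0\|\pm S_0)\subset(\|S_0\|\pm S_0)B$, the paper simply invokes the commutation property of the Krein--von Neumann extension (\cite{KV}*{Corollary 4.3}) to conclude.

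Your approach, by contrast, uses only the \emph{extremal} characterization of $S_m$ and $S_M$ established in the first part: you reduce to a closed domain, pass from $B$ to its spectral projections via the functional calculus, and then prove a clean projection lemma (the block-diagonal compression $T=PS_mP+QS_mQ$ is again a norm-preserving extension, hence $\geq S_m$, so the off-diagonal piece is negative semidefinite with vanishing diagonal blocks and must be zero). This is longer but entirely self-contained: it needs neither the construction of $(\cdot)_N$ nor the external result from \cite{KV}, and the projection lemma itself is an attractive statement that makes transparent \emph{why} extremality forces commutation. The paper's route is shorter and more structural; yours is more elementary and portable to any situation where one only knows the order-extremality of $S_m,S_M$ rather than an explicit formula.
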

\begin{proof}
From the proof of the previous theorem we see that $S_m =(S_0+\|S_0\|)_N-\|S_0\|$ and that $S_M =\|S_0\|-(\|S_0\|-S_0)_N.$ Here we have 
\begin{equation*}
    B(\|S_0\|-S_0)\subset (\|S_0\|-S_0)B \qquad\mbox{and}\qquad B(S_0+\|S_0\|)\subset (S_0+\|S_0\|)B,
\end{equation*}
hence $B(S_0+\|S_0\|)_N=(S_0+\|S_0\|)_NB$ and $B(\|S_0\|-S_0)_N=(\|S_0\|-S_0)_NB$ due to \cite{KV}*{Corollary 4.3}. This clearly gives \eqref{E:BSM}.
\end{proof}
\section{A generalized Strong Parrott Theorem}\label{S:Parrott}

The aim of this section is to generalize Parrott's famous theorem \cite{parrott} on contractive extensions of $2$ by $2$ block operator-valued matrices, which is one of the crucial results in extension and dilation theory. As an application, we will deduce Yamada's recent result \cite{Yamada}*{Theorem 4} on the extension of the Strong Parrott Theorem \cites{F-T,Timotin}.

\begin{theorem}\label{T:dualext}
Let $\dual{F_1}{E_1}^{}_1$ and $\dual{F_2}{E_2}^{}_2$ be two $w^*$-sequentially complete anti-dual pairs and let $T_1:E_1\supseteq \dom T_1\to F_2$ and $T_2:E_2\supseteq \dom T_2\to F_1$  be linear operators such that 
\begin{align*}
\dual{T_1x_1}{x_2}^{}_2=\overline{\dual{T_2x_2}{x_1}^{}_1},\qquad x_1\in\dom T_1, x_2\in\dom T_2.
\end{align*}
Assume furthermore that there exist two positive operators $A_i\in \mathscr{L}(E_i;F_i)$ and constants $\alpha_i\geq 0$, $(i=1,2)$ such that the following estimates hold true: 
\begin{align*}
\abs{\dual{T_1x_1}{y_2}_2}^2&\leq \alpha_1 \dual{A_1x_1}{x_1}_1\dual{A_2y_2}{y_2}_2,\qquad x_1\in\dom T_1, y_2\in E_2,\\
\abs{\dual{T_2x_2}{y_1}_1}^2&\leq \alpha_2 \dual{A_1y_1}{y_1}_1\dual{A_2x_2}{x_2}_2,\qquad x_2\in\dom T_2, y_1\in E_1.
\end{align*}
Then there exists a $T\in\mathscr{L}(E_1;F_2)$ such that $T_1\subseteq T$ and $T_2\subseteq T^*$ and that 
\begin{align*}
\abs{\dual{Ty_1}{y_2}_1}^2&\leq \max\{\alpha_1,\alpha_2\}\cdot \dual{A_1y_1}{y_1}_1\dual{A_2y_2}{y_2}_2,\qquad y_1\in E_1, y_2\in E_2.
\end{align*}
\end{theorem}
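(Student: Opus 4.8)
The plan is to realise the pair $(T_1,T_2)$ as the off-diagonal block of a single symmetric operator acting on a product anti-dual pair, to extend it self-adjointly by Theorem~\ref{T:symmetricext}, and then to recover $T$ from the off-diagonal block of that extension.

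First I would pass to the anti-dual pair $\dual{F_1\oplus F_2}{E_1\oplus E_2}$ carrying the sesquilinear form $\dual{(f_1,f_2)}{(x_1,x_2)}:=\dual{f_1}{x_1}^{}_1+\dual{f_2}{x_2}^{}_2$. This form is separating, and since a sequence in $F_1\oplus F_2$ is weak-$*$ Cauchy exactly when both of its coordinate sequences are, the new pair inherits weak-$*$ sequential completeness from $\dual{F_1}{E_1}^{}_1$ and $\dual{F_2}{E_2}^{}_2$. On the linear manifold $D:=\dom T_1\oplus\dom T_2\subseteq E_1\oplus E_2$ I would define $S_0(x_1,x_2):=(T_2x_2,T_1x_1)$. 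Expanding $\dual{S_0(x_1,x_2)}{(y_1,y_2)}=\dual{T_2x_2}{y_1}^{}_1+\dual{T_1x_1}{y_2}^{}_2$ and comparing it with the complex conjugate of $\dual{S_0(y_1,y_2)}{(x_1,x_2)}$, the hypothesis $\dual{T_1a_1}{a_2}^{}_2=\overline{\dual{T_2a_2}{a_1}^{}_1}$, used once with $(a_1,a_2)=(x_1,y_2)$ and once with $(a_1,a_2)=(y_1,x_2)$, shows that $S_0$ is symmetric on $D$.

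Next I would take $A:=A_1\oplus A_2$, which is everywhere defined, weakly continuous and positive, hence $A\in\mathscr{L}(E_1\oplus E_2;F_1\oplus F_2)$, and check that $S_0$ is $A$-bounded. With the shorthand $u_i:=\dual{A_ix_i}{x_i}_i^{1/2}$ and $v_i:=\dual{A_iy_i}{y_i}_i^{1/2}$, the two hypothesised estimates give $\abs{\dual{S_0(x_1,x_2)}{(y_1,y_2)}}\leq\sqrt{\alpha_1}\,u_1v_2+\sqrt{\alpha_2}\,u_2v_1\leq\sqrt{\max\{\alpha_1,\alpha_2\}}\,(u_1v_2+u_2v_1)$, and the two-dimensional Cauchy--Schwarz inequality $u_1v_2+u_2v_1\leq(u_1^2+u_2^2)^{1/2}(v_1^2+v_2^2)^{1/2}=\dual{Ax}{x}^{1/2}\dual{Ay}{y}^{1/2}$ then yields $\abs{\dual{S_0x}{y}}^2\leq\max\{\alpha_1,\alpha_2\}\dual{Ax}{x}\dual{Ay}{y}$. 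Thus $\alpha_A(S_0)\leq\sqrt{\max\{\alpha_1,\alpha_2\}}$, and Theorem~\ref{T:symmetricext} supplies a self-adjoint $S\in\mathscr{L}(E_1\oplus E_2;F_1\oplus F_2)$ with $S_0\subset S$ and $\alpha_A(S)=\alpha_A(S_0)$.

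Finally I would read off $T$. Being everywhere defined and weakly continuous, $S$ decomposes into blocks $S_{ij}\in\mathscr{L}(E_j;F_i)$, where $S_{ij}$ is the composite of the weakly continuous inclusion $E_j\hookrightarrow E_1\oplus E_2$, the operator $S$, and the weakly continuous projection $F_1\oplus F_2\to F_i$; put $T:=S_{21}$. Testing $S_0\subset S$ against $(x_1,0)$ with $x_1\in\dom T_1$ gives $Tx_1=T_1x_1$, hence $T_1\subseteq T$; self-adjointness of $S$ identifies $S_{12}$ with $T^{*}$, and testing $S_0\subset S$ against $(0,x_2)$ with $x_2\in\dom T_2$ gives $T^{*}x_2=S_{12}x_2=T_2x_2$, hence $T_2\subseteq T^{*}$. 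Applying the $A$-bound of $S$, $\abs{\dual{Su}{w}}^2\leq\alpha_A(S)^2\dual{Au}{u}\dual{Aw}{w}\leq\max\{\alpha_1,\alpha_2\}\dual{Au}{u}\dual{Aw}{w}$, with $u=(y_1,0)$ and $w=(0,y_2)$ reduces the left-hand side to $\abs{\dual{Ty_1}{y_2}^{}_2}^2$ and the right-hand side to $\max\{\alpha_1,\alpha_2\}\dual{A_1y_1}{y_1}^{}_1\dual{A_2y_2}{y_2}^{}_2$, which is the asserted estimate. I expect the only point needing care to be the combination of the two input inequalities so that the single constant $\max\{\alpha_1,\alpha_2\}$ (rather than $\alpha_1+\alpha_2$) survives, which is precisely the role of the Cauchy--Schwarz step, together with the observation that the diagonal blocks $S_{11},S_{22}$ of $S$ need not vanish, since restricting the $A$-bound of $S$ to $E_1\oplus\{0\}$ and $\{0\}\oplus E_2$ already delivers the bound for $T$.
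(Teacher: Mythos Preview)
Your proposal is correct and follows essentially the same route as the paper's own proof: form the product anti-dual pair, define the off-diagonal symmetric operator $S_0(x_1,x_2)=(T_2x_2,T_1x_1)$, bound it against $A_1\oplus A_2$ via Cauchy--Schwarz to obtain the constant $\sqrt{\max\{\alpha_1,\alpha_2\}}$, apply Theorem~\ref{T:symmetricext}, and read off $T$ as the $(2,1)$-block of the resulting self-adjoint extension. You are in fact slightly more explicit than the paper in verifying that the product pair inherits weak-$*$ sequential completeness and in isolating the two-dimensional Cauchy--Schwarz step.
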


\begin{proof}
Let us consider the pair $(E_1\times E_2,F_1\times F_2)$ with antiduality 
$$\sdual{(f_1,f_2)}{(e_1,e_2)}=\dual{f_1}{e_1}_1+\dual{f_2}{e_2}_2,$$
and introduce the following linear operator
\begin{align*}
S_0:E_1\times E_2\supseteq \dom T_1\times\dom T_2\to F_1\times F_2,\qquad S_0(x_1,x_2):=(T_2x_2,T_1x_1).
\end{align*}
An easy calculation shows that $S_0$ is symmetric: indeed, for $x_1,z_1\in\dom T_1$ and $x_2,z_2\in\dom T_2$
\begin{align*}
\sdual{S_0(x_1,x_2)}{(z_1,z_2)}&=\dual{T_2x_2}{z_1}_1+\dual{T_1x_1}{z_2}_2=\overline{\dual{T_1x_z}{z_1}_1}+\overline{\dual{T_2z_2}{x_1}_2}\\
&=\overline{\sdual{S_0(z_1,z_2)}{(x_1,x_2)}}.
\end{align*}
Consider  the positive operator $\Lambda:=\kismatrix{A_1}{0}{0}{A_2}$ acting between $E_1\times E_2$ and $F_1\times F_2$, then 
\begin{multline*}
\abs{\sdual{S_0(x_1,x_2)}{(y_1,y_2)}}\leq \abs{\dual{T_2x_2}{y_1}_1}+\abs{\dual{T_1x_1}{y_2}_2}\\ \leq \alpha\cdot\Big(\dual{A_1x_1}{x_1}_1^{1/2}\dual{A_2y_2}{y_2}_2^{1/2}+\dual{A_1y_1}{y_1}_1^{1/2}\dual{A_2x_2}{x_2}_2^{1/2}\Big)\\
\leq \alpha\cdot \sdual{\Lambda(x_1,x_2)}{(x_1,x_2)}^{1/2}\sdual{\Lambda(y_1,y_2)}{(y_1,y_2))}^{1/2}
\end{multline*}
for $x_i\in \dom T_i, y_i\in E_i$ $(i=1,2)$ and with $\alpha:=\max\{\sqrt{\alpha_1},\sqrt{\alpha_2}\}$. This means that $S_0$ and $\Lambda$ fulfill all the conditions of Theorem \ref{T:symmetricext}. Hence, we conclude that there exists a self-adjoint operator $S\in\mathscr{L}(E_1\times E_2;F_1\times F_2)$ which extends $S_0$ and that 
\begin{align*}
\abs{\sdual{S(y_1,y_2)}{(w_1,w_2)}}^2\leq \alpha^2\cdot\sdual{\Lambda(y_1,y_2)}{(y_1,y_2)}\sdual{\Lambda(w_1,w_2)}{(w_1,w_2)},
\end{align*}
$y_i,w_i\in E_i$ $(i=1,2).$ Let us interpret $S$ as an operator matrix of the form 
\begin{align*}
S=\opmatrix{B_1}{T^*}{T}{B_2},
\end{align*}
where $B_i\in\mathscr{L}(E_i;F_i)$ are self-adjoint  operators and  $T\in \mathscr{L}(E_1;F_2)$.
We claim that $T$ possesses the desired properties. Indeed,
\begin{align*}
\dual{Tx_1}{y_2}_2=\sdual{S(x_1,0)}{(0,y_2)}
                =\sdual{S_0(x_1,0)}{(0,y_2)}=\dual{T_1x_1}{y_2}_2,
\end{align*}
for $x_1\in \dom T_1$, $y_2\in E_2$,  and similarly,
\begin{align*}
\dual{T^*x_2}{y_1}_1=\sdual{S(0,x_2)}{(y_1,0)}
                =\sdual{S_0(0,x_2)}{(y_1,0)}=\dual{T_2x_2}{y_1}_1,
\end{align*}
for $x_2\in \dom T_2$, $y_1\in E_1$, thus we conclude that $T_1\subset T$ and $T_2\subset T^*$. Finally, 
\begin{align*}
\abs{\dual{Ty_1}{y_2}_2}^2&=\abs{\sdual{S(y_1,0)}{(0,y_2)}}^2\\ 
                        &\leq  \alpha^2\cdot\sdual{\Lambda(y_1,0)}{(y_1,0)}\sdual{\Lambda(0,y_2)}{(0,y_2)}\\
                        &= \alpha^2\cdot\dual{A_1y_1}{y_1}_1\dual{A_2y_2}{y_2}_2,
\end{align*}
for $y_i\in E_i, i=1,2$, which completes the proof.
\end{proof}
Using the generalized Parrott theorem above, we obtain a new proof for a recentresult of A. Yamada \cite{Yamada}*{Theorem 4}. 
\begin{corollary}
Let $\dual{F_1}{E_1}^{}_1$, $\dual{F_2}{E_2}^{}_2$ be anti-dual pairs and let $\hil$, $\kil$ be Hilbert spaces. For $S_1\in\mathscr{L}(E_1,\hil)$, $S_2\in\mathscr{L}(E_1,\kil)$, $T_1\in \mathscr{L}(\hil,F_2)$ and $T_2\in\mathscr{L}(\kil,F_2)$ the following conditions are equivalent:
\begin{enumerate}[label=\textup{(\roman*)}]
    \item $T_1S_1=T_2S_2$, $S_2^*S^{}_2\leq  S_1^*S^{}_1$ and $T^{}_1T_1^*\leq T^{}_2T_2^*$,
    \item there exists $X\in\mathscr{B}(\hil,\kil)$, $\|X\|\leq 1$, such that $XS_1=S_2$ and $T_2X=T_1$, i.e., $X$ makes the following diagram commutative:
\begin{center}
\begin{tikzpicture}
\draw (12,4) node (A) {$E_1$};
\draw (10,2) node (B) {$\hil$};
\draw (14,2) node (C) {$\kil$};
\draw (12,0) node (D) {$F_2$};
\draw[ thick,->] (A) -- (B)
 node[pos=0.5,above] {$S_1~$~};
 \draw[ thick,->] (A) -- (C)
 node[pos=0.5,above] {~$\,S_2$};
 \draw[thick, dashed,->] (B) -- (C)
 node[pos=0.5,above,sloped] {$X$};
\draw[thick,->] (B) -- (D)
 node[pos=0.4,below] {$T_1$~};
\draw[thick,->] (C) -- (D)
 node[pos=0.4,below] {~$T_2$};
\end{tikzpicture}
\end{center}
\end{enumerate}
\end{corollary}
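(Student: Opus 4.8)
I would prove the two implications separately. The implication $\mathrm{(ii)}\Rightarrow\mathrm{(i)}$ is routine, so I would handle it first: given $X$ as in $\mathrm{(ii)}$, from $XS_1=S_2$ one gets $T_1S_1=T_2XS_1=T_2S_2$, while $\|X\|\le1$ gives $X^*X\le I_{\hil}$ and $XX^*\le I_{\kil}$, so that
\[
\dual{S_2^*S_2x}{x}_1=\|XS_1x\|_{\kil}^2\le\|S_1x\|_{\hil}^2=\dual{S_1^*S_1x}{x}_1\qquad(x\in E_1)
\]
and, using $T_1^*=X^*T_2^*$,
\[
\dual{T_1T_1^*y}{y}_2=\|X^*T_2^*y\|_{\hil}^2\le\|T_2^*y\|_{\kil}^2=\dual{T_2T_2^*y}{y}_2\qquad(y\in E_2);
\]
hence $S_2^*S_2\le S_1^*S_1$ and $T_1T_1^*\le T_2T_2^*$.

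For $\mathrm{(i)}\Rightarrow\mathrm{(ii)}$ the plan is to reduce everything to Theorem \ref{T:dualext}. I would regard the Hilbert spaces $\hil$ and $\kil$ as the anti-dual pairs $\dual{\hil}{\hil}$ and $\dual{\kil}{\kil}$ (with their inner products; these are $w^*$-sequentially complete), and introduce the two partial operators
\[
\widetilde T_1:\ran S_1\to\kil,\quad S_1x\mapsto S_2x,\qquad\widetilde T_2:\ran T_2^*\to\hil,\quad T_2^*y\mapsto T_1^*y.
\]
The two operator inequalities in $\mathrm{(i)}$ are precisely what makes these definitions unambiguous: $S_1x=0$ forces $\|S_2x\|^2=\dual{S_2^*S_2x}{x}_1\le\dual{S_1^*S_1x}{x}_1=0$, and $T_2^*y=0$ forces $\|T_1^*y\|^2=\dual{T_1T_1^*y}{y}_2\le\dual{T_2T_2^*y}{y}_2=0$.

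Next I would check the hypotheses of Theorem \ref{T:dualext} for $\widetilde T_1,\widetilde T_2$ with $A_1:=I_{\hil}$, $A_2:=I_{\kil}$ and $\alpha_1=\alpha_2=1$. Unwinding the defining relations of the adjoints $S_1^*,S_2^*,T_1^*,T_2^*$, the required symmetry identity $\sip{\widetilde T_1 h}{k}_{\kil}=\overline{\sip{\widetilde T_2 k}{h}_{\hil}}$ for $h\in\ran S_1$, $k\in\ran T_2^*$ reduces, with $h=S_1x$ and $k=T_2^*y$, to $\dual{T_2S_2x}{y}_2=\dual{T_1S_1x}{y}_2$, which holds for all $x\in E_1$, $y\in E_2$ because $T_1S_1=T_2S_2$ and the pairings are separating. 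For the bounds, Cauchy--Schwarz together with $S_2^*S_2\le S_1^*S_1$ gives $\abs{\sip{\widetilde T_1 S_1x}{k}_{\kil}}^2\le\|S_2x\|^2\|k\|^2\le\|S_1x\|^2\|k\|^2$ for $x\in E_1$, $k\in\kil$, and, symmetrically, $\abs{\sip{\widetilde T_2 T_2^*y}{h}_{\hil}}^2\le\|T_1^*y\|^2\|h\|^2\le\|T_2^*y\|^2\|h\|^2$ for $y\in E_2$, $h\in\hil$, using $T_1T_1^*\le T_2T_2^*$. Theorem \ref{T:dualext} then yields an operator $X\in\mathscr{L}(\hil,\kil)=\mathscr{B}(\hil,\kil)$ with $\widetilde T_1\subset X$, $\widetilde T_2\subset X^*$ and $\abs{\sip{Xh}{k}_{\kil}}^2\le\|h\|^2\|k\|^2$ for all $h\in\hil$, $k\in\kil$; the last inequality means $\|X\|\le1$.

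It remains to read $\mathrm{(ii)}$ off the two inclusions: $\widetilde T_1\subset X$ says $XS_1x=S_2x$ for every $x\in E_1$, i.e.\ $XS_1=S_2$; and $\widetilde T_2\subset X^*$ says $X^*T_2^*y=T_1^*y$ for every $y\in E_2$, i.e.\ $X^*T_2^*=T_1^*$, hence $T_2X=T_1$ after passing to adjoints (using $(X^*)^*=X$ and $(T_2^*)^*=T_2$). The step I expect to demand the most care is exactly this final bookkeeping together with the unwinding of the symmetry identity, where one has to keep straight the various anti-dual-pair adjoint conventions relating $\dual{F_i}{E_i}^{}_i$ to the Hilbert spaces $\hil$ and $\kil$; once $\widetilde T_1$ and $\widetilde T_2$ have been identified correctly, the remainder is a direct invocation of Theorem \ref{T:dualext}.
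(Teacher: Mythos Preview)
Your proof is correct and follows essentially the same route as the paper's own argument: introduce the partial operators on $\ran S_1$ and $\ran T_2^*$ (your $\widetilde T_1,\widetilde T_2$ are exactly the paper's $X_0,X_1$), verify the symmetry relation via $T_1S_1=T_2S_2$ and the bounds via the two operator inequalities, and then invoke Theorem~\ref{T:dualext} with $A_1=I_{\hil}$, $A_2=I_{\kil}$, $\alpha_1=\alpha_2=1$. Your write-up is in fact somewhat more detailed than the paper's (you spell out $\mathrm{(ii)}\Rightarrow\mathrm{(i)}$ and the well-definedness checks), but there is no substantive difference in strategy.
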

\begin{proof}
Implication (ii)$\Rightarrow$(i) is straightforward, so we only prove that (i) implies (ii). Consider the anti dual pairs $\sip \hil\hil$ and $\sip \kil\kil$ and the operators $$X_0:\hil\supseteq \ran S_1\to\kil\qquad\mbox{and}\qquad X_1:\kil\supseteq \ran T_2^*\to\hil,$$ 
defined by 
\begin{equation*}
    X_0(S_1x_1)\coloneqq S_2x_1, \qquad X_1(T_2^*x_2)\coloneqq T_1^*x_2.
\end{equation*}
From (i) we see that $X_0,X_1$ are well defined contractions such that 
\begin{align*}
        \sip{X_0(S_1x_1)}{T_2^*x_2}=\sip{S_2x_1}{T_2^*x_2}=\sip{S_1x_1}{T_1^*x_2} =\sip{S_1x_1}{X_1(T_2^*x_2)},
\end{align*}
for every $x_1\in E_1$ and $x_2\in E_2$.
Hence the pair $X_0, X_1$ fulfills the conditions of Theorem \ref{T:dualext} with $A_1=I_\hil$, $A_2=I_\kil$ and $\alpha_1=\alpha_2=1$. Consequently, there exists  $X\in\mathscr{B}(\hil,\kil)$, $\|X\|\leq1$ such that $X_0\subset X$, $X_1\subset X^*$, and therefore $XS_1=S_2$ and $X^*T_2^*=T_1^*$ which yields (ii).    
\end{proof}
Yamada's work itself generalizes Parrott's theorem \cite{parrott} and the Strong Parrott Theorem  (see \cites{Ando-Hara,Bakonyi, F-T}). So we get the following classical result automatically. 

\begin{corollary}
Let $\hil$ and $\kil$ be Hilbert spaces, $\hil_1\subseteq\hil$ and $\kil_1\subseteq\kil$ be closed linear subspaces, and denote by $P_{K_1}$ the orthogonal projection onto $K_1$. For given contractions $T_1:\hil_1\to\kil$ and $T_1':\hil\to\kil_1$ the following conditions are equivalent:
\begin{enumerate}[label=\textup{(\roman*)}]
\item $P_{K_1}T_1=T_1'|_{H_1},$
\item there exists a contraction $T\in\bhk$ such that 
\begin{align*} T_1=T|_{H_1}\quad\mbox{and}\quad T_1'=P_{K_1}T.
\end{align*}
\end{enumerate}
\end{corollary}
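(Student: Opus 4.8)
The plan is to deduce this from the previous corollary by setting up the right dictionary. In that corollary I would keep the Hilbert spaces $\hil$ and $\kil$ unchanged and take the two anti-dual pairs to be $\sip{\hil_1}{\hil_1}$ and $\sip{\kil_1}{\kil_1}$, so that in the notation used there $E_1=F_1=\hil_1$ and $E_2=F_2=\kil_1$. Write $\iota\colon\hil_1\to\hil$ and $\kappa\colon\kil_1\to\kil$ for the inclusion operators; these are bounded, hence weakly continuous, between the Hilbert spaces in question, they satisfy $\iota^{*}\iota=I_{\hil_1}$, and $\kappa$ is precisely the adjoint of $P_{K_1}$ regarded as a map $\kil\to\kil_1$, so that $P_{K_1}\kappa=I_{\kil_1}$. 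Into the four operator slots of the previous corollary I would then plug, in order,
\begin{equation*}
S_1:=\iota\in\mathscr L(\hil_1,\hil),\qquad S_2:=T_1\in\mathscr L(\hil_1,\kil),
\end{equation*}
and
\begin{equation*}
\widetilde T_1:=T_1'\in\mathscr L(\hil,\kil_1),\qquad \widetilde T_2:=P_{K_1}\in\mathscr L(\kil,\kil_1).
\end{equation*}

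Next I would verify that, under this identification, condition (i) of the previous corollary is nothing but the hypotheses at hand. Indeed $S_1^{*}S_1=\iota^{*}\iota=I_{\hil_1}$, whereas $S_2^{*}S_2=T_1^{*}T_1\leq I_{\hil_1}$ since $T_1$ is a contraction, so $S_2^{*}S_2\leq S_1^{*}S_1$; dually $\widetilde T_2\widetilde T_2^{*}=P_{K_1}\kappa=I_{\kil_1}$, whereas $\widetilde T_1\widetilde T_1^{*}=T_1'(T_1')^{*}\leq I_{\kil_1}$ since $T_1'$ is a contraction, so $\widetilde T_1\widetilde T_1^{*}\leq\widetilde T_2\widetilde T_2^{*}$; and the remaining relation $\widetilde T_1S_1=\widetilde T_2S_2$ reads $T_1'\iota=P_{K_1}T_1$, that is, $T_1'|_{H_1}=P_{K_1}T_1$, which is precisely assumption (i). The previous corollary then yields a contraction $T\in\bhk$ with $TS_1=S_2$ and $\widetilde T_2T=\widetilde T_1$; unravelling these, $T\iota=T_1$ and $P_{K_1}T=T_1'$, that is, $T|_{H_1}=T_1$ and $T_1'=P_{K_1}T$, which is exactly (ii).

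The reverse implication (ii)$\Rightarrow$(i) needs nothing from extension theory: if $T$ is as in (ii), then $P_{K_1}T_1=P_{K_1}\bigl(T|_{H_1}\bigr)=\bigl(P_{K_1}T\bigr)|_{H_1}=T_1'|_{H_1}$. Thus the argument involves no genuine obstacle; the only point calling for a little care is matching, slot by slot, the four operator conditions of the previous corollary with the data of the Strong Parrott Theorem, together with the two elementary identities $\iota^{*}\iota=I_{\hil_1}$ and $P_{K_1}\kappa=I_{\kil_1}$ relating a closed subspace to its ambient Hilbert space.
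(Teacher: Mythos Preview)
Your argument is correct and follows precisely the route the paper intends: the paper states this corollary without proof, noting only that it is obtained ``automatically'' from the preceding corollary (Yamada's result), and your dictionary $S_1=\iota$, $S_2=T_1$, $\widetilde T_1=T_1'$, $\widetilde T_2=P_{K_1}$ with $E_1=\hil_1$, $F_2=\kil_1$ is exactly the specialization that makes this work. The verifications you give of the three conditions in (i) and of the translation of (ii) are all correct.
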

\section{Hermitian extensions of linear functionals}\label{S: funct}

Positive functionals play an important role in the representation theory of algebras. Extension of such functionals has been investigated in many different settings. For example, if $f$ is a positive linear functional defined on a closed ideal in a $C^{*}$-algebra, then $f$ always admits an extension with the same norm (see \cite{Blackadar}*{II.6.4.16}). Positive functionals defined on left-ideals of the full operator algebra possessing normal extension were characterized in \cite{STTfunk}, while positive extendibility of positive functionals defined on left ideals of general ${}^*$-algebras was studied in \cite{KV}. The aim of this section is to demonstrate how our anti-dual pair setting can be used to construct hermitian extensions of linear functionals in the unital ${}^*$-algebra setting.
\par
Let $\alg$ be a unital *-algebra with unit $1$. If $\ideal\subseteq \alg$ is a left ideal then we call a linear functional $g:\ideal\to\dupC$ is  \emph{symmetric}, if 
\begin{equation*}
    g(b^*a)=\overline{g(a^*b)},\qquad a,b\in\ideal.
\end{equation*}
A linear functional $g\in\alg^*$ is called \emph{hermitian} if it satisfies 
\begin{equation*}
    g(x^*)=\overline{g(x)},\qquad x\in\alg.
\end{equation*}
It is easy to check that  a linear functional $g\in\alg^*$ is hermitian if and only if it is symmetric. Note that for a *-algebra without unit element this equivalence does not longer hold.

Assume that a positive linear functional $f:\alg\to\dupC$ is given. We say that the symmetric functional $g:\ideal\to\dupC$ is $f$-\emph{bounded}, if 
\begin{equation}\label{E:f-bounded}
    \abs{g(x^*a)}^2\leq \alpha^2 f(x^*x)f(a^*a),\qquad x\in\alg,a\in\ideal,
\end{equation}
holds for some $\alpha>0$. The $f$-bound $\alpha_f(g)$ is defined as the smallest constant $\alpha$  that fulfills \eqref{E:f-bounded}. If $\ell:\ideal\to\dupC$ is a linear functional then the correspondence 
\begin{equation*}
    \dual{La}{x}\coloneqq \ell(x^*a),\qquad a\in\ideal, x\in\alg,
\end{equation*} 
defines a linear operator $L:\ideal\to\bar{\alg}^*$. Clearly, $L$ is positive if $\ell$ is positive and $L$ is symmetric if $\ell$ is so. Suppose now that $f\in\alg^*$ is  a positive functional and denote by $A$ the positive operator associated with $f$, i.e.,  
\begin{equation*}
    \dual{Ax}{y}=f(y^*x),\qquad x,y\in\alg.
\end{equation*}
Let $\hila$ denote the corresponding Hilbert space, that is, $\hila$ is the completion of $\ran A$ endowed with the inner product \eqref{E:sipa}. Observe that in that case we have
\begin{equation*}
    \sipa{Ax}{Ay}=f(y^*x),\qquad x,y\in\alg.
\end{equation*}
Consider the canonical embedding $J:\hila\to\bar{\alg}^*$ in \eqref{E:J0} and recall its useful properties 
\begin{equation*}
    J^*x=Ax,\qquad x\in \alg,
\end{equation*}
and $JJ^*=A$. 
Assume in addition that 
\begin{equation}\label{E:admissible}
    \abs{f(y^*x^*xy)}\leq M_x f(y^*y),\qquad x,y\in\alg, 
\end{equation}
for some $M_x\geq0$. This assures that the operators $\pi_f(x)$ defined by
\begin{equation*}
    \pi_f(x)(J^*y)\coloneqq J^*(xy),\qquad y\in\alg,
\end{equation*}
are continuous on $\hila$ by norm bound $M_x^{1/2}$. Thus, for every $x\in\alg$ we can extend $\pi_f(x)$ to an element of $\bha$. It is then immediate that $\pi_f:\alg\to\bha$ is a $^*$-homomorphism such that 
\begin{equation}\label{E:pif}
    f(x)=\sipa{\pi_f(x)(J^*1)}{J^*1},\qquad x\in\alg.
\end{equation}
A positive functional satisfying \eqref{E:admissible} (and hence \eqref{E:pif}) will be called \emph{representable} \cite{Sebestyen84}.

\begin{theorem}\label{T:hermitianext}
Let $\alg$ be a unital $^*$-algebra, $\ideal\subseteq \alg$ a left ideal and $f\in\alg^*$ a representable positive functional. If  $g^{}_0:\ideal\to\dupC$ is an $f$-bounded symmetric functional with $f$-bound $\alpha_f(g^{}_0)$ then there exist two distinguished $f$-bounded hermitian functionals $g^{}_m,g^{}_M\in\alg^*$ with $f$-bound $\alpha_f(g_m)=\alpha_f(g^{}_M)=\alpha_f(g_0)$ extending $g^{}_0$. Furthermore, $g_m\leq g^{}_M$ and the interval $[g_m,g_M]$ consists of all hermitian $f$-bound preserving extensions   of $g^{}_0$:
\begin{equation*}
    [g_m,g_M]=\set{g\in\alg^*}{g_0\subset g=g^*, \alpha_f(g)=\alpha_f(g_0)}.
\end{equation*}
\end{theorem}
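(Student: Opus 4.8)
The plan is to carry the problem into the operator framework of Theorem~\ref{T:symmetricext}, solve it there, and carry the solution back; the only delicate point is to ensure that the operator extensions produced by that theorem are again \emph{induced by functionals}, and this is exactly what the commutation assertion of Corollary~\ref{C:Krein} secures.

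First I would work in the weak-$^*$ sequentially complete anti-dual pair $\dual{\bar{\alg}^*}{\alg}$ (the algebraic anti-dual of $\alg$ carries the complete topology of pointwise convergence). With $A$ the positive operator associated with $f$ and $\hila$, $J$, $\pi_f$ as in \eqref{E:sipa}--\eqref{E:pif} (this is where representability of $f$ is used), define $S_0:\alg\supseteq\ideal\to\bar{\alg}^*$ by $\dual{S_0 a}{x}:=g_0(x^*a)$ for $a\in\ideal$, $x\in\alg$. Symmetry of $S_0$ is symmetry of $g_0$, and the $f$-boundedness inequality \eqref{E:f-bounded} is exactly the $A$-boundedness of $S_0$ with $\alpha_A(S_0)=\alpha_f(g_0)$. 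Theorem~\ref{T:symmetricext} then produces the auxiliary bounded symmetric operator $\widehat{S}_0$ on $D:=\set{Aa}{a\in\ideal}\subseteq\hila$, with $\|\widehat{S}_0\|=\alpha_f(g_0)$, together with the self-adjoint operators $\widehat{S}_m,\widehat{S}_M\in\bha$ from its proof, for which $\widehat{S}_m\leq\widehat{S}_M$, $\|\widehat{S}_m\|=\|\widehat{S}_M\|=\alpha_f(g_0)$, and $S_m=J\widehat{S}_mJ^*$, $S_M=J\widehat{S}_MJ^*$.

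The key step is commutation. Since $\pi_f(a)(J^*1)=J^*a=Aa$ for every $a\in\alg$, we have $D=\pi_f(\ideal)(J^*1)$, and as $\ideal$ is a left ideal every $\pi_f(x)$ maps $D$ into $D$; combining this with the defining relation $\sipa{\widehat{S}_0(Aa)}{Ay}=g_0(y^*a)$ and $\pi_f(x)^*=\pi_f(x^*)$ one obtains $\pi_f(x)\widehat{S}_0\subseteq\widehat{S}_0\pi_f(x)$. For self-adjoint $x$ the operator $\pi_f(x)$ is self-adjoint, so Corollary~\ref{C:Krein}, applied inside $\hila$, gives $\widehat{S}_m\pi_f(x)=\pi_f(x)\widehat{S}_m$ and $\widehat{S}_M\pi_f(x)=\pi_f(x)\widehat{S}_M$; since $\alg$ is spanned by its self-adjoint elements, $\widehat{S}_m$ and $\widehat{S}_M$ commute with the whole range of $\pi_f$. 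I would then set $g_m(x):=\sipa{\widehat{S}_m\pi_f(x)(J^*1)}{J^*1}$ and $g_M(x):=\sipa{\widehat{S}_M\pi_f(x)(J^*1)}{J^*1}$. Using that $\pi_f$ is a $^*$-homomorphism, that $\widehat{S}_m,\widehat{S}_M$ are self-adjoint and commute with $\pi_f$, that $\|\pi_f(x)(J^*1)\|_A^2=f(x^*x)$, and that $\pi_f(a)(J^*1)=Aa\in D$ for $a\in\ideal$, a routine check yields: $g_m$ and $g_M$ are hermitian; $g_m(x^*a)=\sipa{\widehat{S}_0(Aa)}{Ax}=g_0(x^*a)$ for $a\in\ideal$, $x\in\alg$, so (taking $x=1$) both extend $g_0$; $\abs{g_m(x^*a)}^2\leq\alpha_f(g_0)^2 f(x^*x)f(a^*a)$ and similarly for $g_M$, whence $\alpha_f(g_m)=\alpha_f(g_M)=\alpha_f(g_0)$; and $\dual{(S_M-S_m)x}{x}=(g_M-g_m)(x^*x)$ with $\widehat{S}_m\leq\widehat{S}_M$ gives $g_m\leq g_M$.

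Finally I would prove the interval description by moving back and forth through the induced operator. For a hermitian $h\in\alg^*$ put $\dual{B_h x}{y}:=h(y^*x)$: this is an everywhere-defined self-adjoint operator in $\mathscr{L}(\alg;\bar{\alg}^*)$, and $\dual{B_h x}{x}=h(x^*x)$ shows that $g_m\leq h\leq g_M$ amounts to $S_m\leq B_h\leq S_M$, while $g_0\subset h$ amounts to $S_0\subset B_h$ (here one uses $1\in\alg$ and that $\ideal$ is a left ideal); one also checks $B_{g_m}=S_m$, $B_{g_M}=S_M$, and $\alpha_A(B_h)=\alpha_f(h)$. If $h\in[g_m,g_M]$, then $h$ is hermitian (it is $g_m$ plus the positive functional $h-g_m$, which is hermitian because $\alg$ is unital) and $S_m\leq B_h\leq S_M$, so by \eqref{E:intervalSmSM} we get $S_0\subset B_h$ and $\alpha_A(B_h)=\alpha_A(S_0)$, that is $g_0\subset h$ and $\alpha_f(h)=\alpha_f(g_0)$. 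Conversely, if $g_0\subset h=h^*$ with $\alpha_f(h)=\alpha_f(g_0)$, then $B_h$ is a self-adjoint extension of $S_0$ with $\alpha_A(B_h)=\alpha_A(S_0)$, so \eqref{E:intervalSmSM} places $B_h$ in $[S_m,S_M]$, that is $g_m\leq h\leq g_M$. The main obstacle is the commutation step: without it the extensions furnished by Theorem~\ref{T:symmetricext} need not come from functionals at all, and it is Corollary~\ref{C:Krein} that repairs this.
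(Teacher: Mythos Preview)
Your proposal is correct and follows essentially the same route as the paper: pass to the auxiliary Hilbert space $\hila$, build the bounded symmetric operator there, use the left-ideal property to get $\pi_f(x)\widehat{S}_0\subset\widehat{S}_0\pi_f(x)$, invoke Corollary~\ref{C:Krein} to obtain extremal self-adjoint extensions commuting with $\pi_f$, and read off $g_m,g_M$ as the associated vector functionals at $J^*1$. The only cosmetic difference is that you start in the anti-dual pair $\dual{\bar{\alg}^*}{\alg}$ and reach into the proof of Theorem~\ref{T:symmetricext} for the Hilbert-space objects $\widehat{S}_0,\widehat{S}_m,\widehat{S}_M$, whereas the paper works directly inside $\hila$ from the outset and applies Corollary~\ref{C:Krein} there; since $J^*a=Aa$ and $\pi_f(x)(J^*1)=J^*x$, your $\widehat{S}_0$ is exactly the paper's $S_0$ and your $g_m,g_M$ coincide with theirs.
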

\begin{proof}
Along the lines of the proof of Theorem \ref{T:symmetricext}, let us introduce a symmetric operator $S^{}_0$ on $\dom S_0\coloneqq \set{J^*a}{a\in\ideal}$ such that 
\begin{equation*}
    \sipa{S_0(J^*a)}{J^*x}=g^{}_0(x^*a),\qquad a\in\ideal,x\in\alg.
\end{equation*}
 A straightforward calculation shows that $S_0:\dom S_0\to\hila$ is bounded with norm $\|S_0\|=\alpha_f(g_0)$. For $x,y\in \alg$ and $a\in\ideal$ we have 
\begin{align*}
\sipa{S_0\pi_f(x)(J^*a)}{J^*y}&=\sipa{S_0J^*(xa)}{J^*y}=g(y^*xa)=\sipa{S_0(J^*a)}{J^*(x^*y)}\\
                          &=\sipa{S_0J^*a}{\pi_f(x^*)J^*y}=\sipa{\pi_f(x)S_0(J^*a)}{J^*y},
\end{align*}
hence we infer that  $\dom S_0$ is $\pi_f$-invariant and  $\pi_f(x)S_0\subset S_0\pi_f(x)$ for all $x\in\alg$. By Corollary \ref{C:Krein}, it follows that there exist two norm preserving self-adjoint extensions $S_m,S_M\in\bha$ of $S_0$ such that 
\begin{equation}\label{E:pi(x)S}
    \pi_f(x)S_m=S_m\pi_f(x),\qquad\mbox{and}\qquad \pi_f(x)S_M=S_M\pi_f(x),
\end{equation}
whenever $x$ is self-adjoint, and hence also for every $x\in\alg$. We claim that the functionals
\begin{equation*}
    g^{}_m(x)\coloneqq \sipa{S_mJ^*x}{J^*1},\qquad\mbox{and}\qquad g^{}_M(x)\coloneqq \sipa{S_mJ^*x}{J^*1},\qquad x\in\alg,
\end{equation*}
fulfill all conditions of the statement. First we observe that $g_m$ and $g_M$ are hermitian: indeed, by \eqref{E:pi(x)S} we have for every $x\in\alg$ that   
\begin{align*}
    g^{}_m(x^*)&=\sipa{S_mJ^*x^*}{J^*1}=\sipa{\pi^{}_f(x^*)S_mJ^*1}{J^*1}=\sipa{S_mJ^*1}{J^*x}=\overline{g_m(x)}
\end{align*}
A similar argument shows that $g_M$ is hermitian. Next, observe that $g_m$ and $g_M$ extend $g_0$, because for every $a\in\ideal$ 
\begin{equation*}
    g_m(a)=\sipa{S_mJ^*a}{J^*1}=\sipa{S_0J^*a}{J^*1}=g(a)
\end{equation*}
holds, and similarly, $g^{}_M(a)=g(a)$. Finally, we have 
\begin{equation*}
\|S_m\|=\|S_M\|=\|S\|=\alpha_f(g_0),    
\end{equation*}
whence it follows readily that $g_m$ and $g_M$ have $f$-bound $\alpha_f(g_0)$.
 
Let now $g\in\alg^*$ be an arbitrary hermitian extension of $g_0$ having $f$-bound $\alpha_f(g_0)$. Then there is a self-adjoint operator $S\in\bha$, $\|S\|=\alpha_f(g_0)$ such that 
\begin{equation*}
    \sipa{SJ^*x}{J^*y}=g(y^*x),\qquad x,y\in\alg.
\end{equation*}
It is clear that $S_0\subset S$ and therefore $S_m\leq S\leq S_M$, due to Corollary \ref{C:Krein}. As a straightforward consequence we conclude that  $g_m\leq g\leq g_M$. Suppose conversely that $g\in\alg^*$ is a hermitian functional such that $g_m\leq g\leq g_M$. First observe that $g$ is $f$-bounded as it satisfies 
\begin{align*}
\abs{g(y^*x)}&\leq (g-g_m)(x^*x)^{1/2}(g-g_m)(y^*y)^{1/2}+\abs{g_m(y^*x)} \\
&\leq 3\alpha_f(g^{}_0)f(x^*x)^{1/2}f(y^*y)^{1/2}.
\end{align*}
Hence there exists a self-adjoint operator $S\in\bha$, such that 
\begin{equation*}
    g(y^*x)=\sipa{JSJ^*x}{y},\qquad x,y\in\alg.
\end{equation*}
From $g_m\leq g\leq g_M$ it follows  that $S_m\leq S\leq S_M$,  and therefore  $S_0\subset S$ by Corollary \ref{C:Krein}. Consequently,
\begin{equation*}
    g^{}_0(a)=\sipa{S_0J^*a}{J^*1}=\sipa{SJ^*a}{J^*1}=g(a),\qquad a\in\ideal,
\end{equation*}
thus $g_0\subset g$. The proof is complete.
\end{proof}
We remark that Theorem \ref{T:hermitianext} provides only a sufficient condition for the existence of hermitian extensions. On $C^*$-algebras, the statement of Theorem \ref{T:hermitianext} may be improved in two ways: first, the condition on $f$ of being representable may be replaced by the formally weaker one of being positive. On the other hand,  the existence of a dominating positive functional is both necessary and sufficient. 
\begin{corollary}
Let $\alg$ be a unital $C^*$-algebra and $\ideal\subseteq\alg$ a left ideal. A linear functional $g^{}_0:\ideal\to\dupC$ possesses a continuous hermitian extension $g$ if and only if $g_0$ is symmetric and $f$-bounded for some positive functional $f\in\alg^*$. 
\end{corollary}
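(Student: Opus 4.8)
The plan is to derive both implications from the apparatus already set up, the single genuinely new ingredient being the Jordan decomposition of a bounded hermitian functional on a $C^*$-algebra.

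\emph{The ``if'' direction.} Suppose $g^{}_0\colon\ideal\to\dupC$ is symmetric and $f$-bounded for some positive $f\in\alg^*$. First I would upgrade $f$ from merely positive to representable. On a unital $C^*$-algebra a positive functional is automatically bounded, and for any $x,y\in\alg$ we have $\|x\|^2 1 - x^*x\ge0$, say $\|x\|^2 1 - x^*x=c^*c$; hence
\[
\|x\|^2 f(y^*y)-f(y^*x^*xy)=f\big((cy)^*(cy)\big)\ge0 ,
\]
so \eqref{E:admissible} holds with $M_x=\|x\|^2$ and $f$ is representable. Now Theorem~\ref{T:hermitianext} applies verbatim and produces an $f$-bounded hermitian functional $g^{}_m\in\alg^*$ with $\alpha_f(g_m)=\alpha_f(g_0)$ extending $g^{}_0$. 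Finally $g_m$ is automatically continuous: taking $x=1$ in \eqref{E:f-bounded} and using boundedness of $f$ gives $\abs{g_m(a)}^2\le\alpha_f(g_0)^2\,f(1)\,f(a^*a)\le\alpha_f(g_0)^2\,f(1)\,\|f\|\,\|a\|^2$ for every $a\in\alg$. The degenerate case $f=0$ is harmless, since then $f$-boundedness forces $g_0=0$ and $g=0$ serves as the extension.

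\emph{The ``only if'' direction.} Conversely assume $g\in\alg^*$ is a continuous hermitian functional with $g^{}_0\subset g$. For $a,b\in\ideal$ we have $b^*a,\,a^*b\in\ideal$ (left ideal) with $(b^*a)^*=a^*b$, so hermiticity of $g$ yields $g^{}_0(b^*a)=g(b^*a)=\overline{g(a^*b)}=\overline{g^{}_0(a^*b)}$; thus $g^{}_0$ is symmetric. For the $f$-bound I would invoke the Jordan decomposition of the bounded hermitian functional $g$ on the $C^*$-algebra $\alg$: write $g=g_+-g_-$ with $g_\pm$ positive and put $f:=g_++g_-\in\alg^*$, a positive functional. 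Applying the Cauchy--Schwarz inequality to each of the positive functionals $g_\pm$ and then the elementary estimate $s_1t_1+s_2t_2\le(s_1^2+s_2^2)^{1/2}(t_1^2+t_2^2)^{1/2}$ gives, for all $x,a\in\alg$,
\[
\abs{g(x^*a)}\le g_+(x^*x)^{1/2}g_+(a^*a)^{1/2}+g_-(x^*x)^{1/2}g_-(a^*a)^{1/2}\le f(x^*x)^{1/2}f(a^*a)^{1/2},
\]
so $g$, and a fortiori its restriction $g^{}_0$, is $f$-bounded with $f$-bound at most $1$.

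\emph{Where the work lies.} I do not anticipate a serious obstacle. The only step that genuinely uses the $C^*$-structure, rather than the purely algebraic framework of Theorem~\ref{T:hermitianext}, is the passage from a continuous hermitian functional to a dominating positive functional, and this is exactly what the Jordan decomposition provides; without it one only has the weaker sufficient condition of Theorem~\ref{T:hermitianext}. The remaining points --- verifying representability of $f$ from positivity via the order structure of $\alg$, and reading continuity of $g_m$ off its $f$-bound --- are routine.
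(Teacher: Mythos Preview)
Your proof is correct and follows the same strategy as the paper: representability of positive functionals on a $C^*$-algebra plus Theorem~\ref{T:hermitianext} for the ``if'' direction, and the Hahn--Jordan decomposition $g=g_+-g_-$ with $f=g_++g_-$ for the converse. You supply more detail (the representability calculation, the symmetry check for $g_0$, the explicit Cauchy--Schwarz estimate) and in fact obtain the sharper bound $\alpha_f(g_0)\le 1$ where the paper records~$4$.
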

\begin{proof}
On a $C^*$-algebra, every positive functional $f$ is representable. If $g^{}_0$ is symmetric and $f$-bounded then $g^{}_0$ has a hermitian extension $g$ that is of the form 
\begin{equation*}
    g(a)=\sip{SJ^*a}{J^*1},\qquad a\in\alg,
\end{equation*}
where $S$ is a bounded self-adjoint operator on $\hila$. It follows therefore that $g$ is continuous. For the converse, assume that $g\in\alg^*$ is a continuous hermitian extension of $g^{}_0$. Let $g=g^{}_+-g^{}_-$ be the Hahn--Jordan decomposition of $g$, with $g^{}_+,g^{}_-\in\alg^*$ positive functionals (see \cite{K-R}). Letting $f\coloneqq g^{}_++g^{}_-$, it is easy to check that $g_0$ is $f$-bounded with bound $\alpha_f(g_0)=4$.
\end{proof}
\textbf{Acknowledgement.} We thank the Referees for their time and for careful reading of the manuscript. Their comments helped improving the quality of the paper immensely.


\end{document}